\DeclareMathAlphabet{\mathbbm}{U}{bbm}{m}{n}% from bbm.sty
\newtheorem{Theorem}{Theorem}[section]
\newtheorem{prop}[Theorem]{Proposition}
\newtheorem{cor}[Theorem]{Corollary}
\theoremstyle{definition}
\newtheorem{definition}[Theorem]{Definition}
\newcommand{\impl}{\rightarrow}
\newcommand{\Int}{\mathbf{Int}}
\newcommand{\KF}{\mathbf{K4}}
\newcommand{\Ext}{\mathbf{Ext}}
\newcommand{\Frm}{\mathsf{Fm}}
\newcommand{\lbr}{\langle}
\newcommand{\rbr}{\rangle}
\newcommand{\Ax}{Ax} %%%%%%
\newcommand{\Logl}{\mathcal{L}} 
\newcommand{\LogL}{\mathsf{L}}
\newcommand{\DS}{\mathsf{S}}
\newcommand{\Rules}{\mathsf{R}}
\newcommand{\Ds}{\DS \bydef \langle \Ax,\Rules \rangle}
\newcommand{\mpos}{\oplus}
\newcommand{\mrej}{\ominus}
\newcommand{\mnu}{\odot}
\newcommand{\notmnu}{\overline{\odot}}
\newcommand{\St}{\mathsf{St}}
\newcommand{\bydef} {:=}
\newcommand{\means}{\leftrightharpoons}
\newcommand{\Inf}{\mathfrak{I}}
\newcommand{\ulog}{\Logl = \langle \LogL^+,\LogL^- \rangle}
\newcommand{\set}[2]{\{#1 \mid #2\}}
\def\alg#1{\mathsf{#1}}
\def\Alg#1{\mathscr{#1}}
\title{{\L}-axiomatizability in Intermediate and Normal Modal Logics}
\keywords{intermediate logics, normal modal logics, canonical formulas, refutation systems}
\subjclass{This is not required.}
\author{Alex Citkin}
\address{metropolitan Telecommunications, USA} 
\email{acitkin@gmail.com}
\begin{document}

\begin{abstract}
A set $F$ of formulas is complete relative to a given class of logics, if every logic from this class can be axiomatized by formulas from $F$. A set of formulas  $F$  is {\L}-complete relative to a given class of logics, if every logic of this class can be {\L}-axiomatized by formulas from $F$, that is, every of these logics can be defined by an $\L$-deductive system with axioms and anti-axioms from $F$ and inference rules modus ponens, modus tollens, substitution and reverse substitution. We prove that every complete relative to $\Ext\Int$ (or $\Ext\KF$) set of formulas is {\L}-complete. In particular, every logic from $\Ext\Int$ (or $\Ext\KF$) can be {\L}-axiomatized by Zakharyaschev's canonical formulas.
\end{abstract} 

\maketitle

%%%%\end{verbatim}

\section{Introduction}

Canonical formulas were introduced by M.~Zakharyaschev (for details and references see  \cite{Chagrov_Zakh}). They have been instrumental in studying intermediate and normal modal logic. The canonical formulas form a complete set of formulas, meaning that any intermediate logic or any normal extension of $\KF$ can be axiomatized over intuitionistic propositional calculus (IPC) or, respectively, over $\KF$ by canonical formulas. Our goal is to demonstrate that canonical formulas form the complete set not only for proving formulas but also for deriving, while using a {\L}ukasiewicz-style calculi ({\L}-deductive system), the rejection of formulas. We will prove a stronger statement: one can construct {\L}-axiomatization of every logic from $\Ext\Int$ or $\Ext\KF$ using any given complete set of formulas. 

The refutation system for various intermediate and normal modal logics were extensively studied by T.~Skura, V.~Goranko (see, for instance, \cite{Skura1998,Goranko_Refutation_1994}). In \cite{Skura_Syntactic_1994} T.~Skura observed that in case of finitely approximated logics the Jankov formulas give the complete set of anti-axioms, that is the additional axioms that can be used to prove refutation of a formula. The canonical formulas are, in a way, the modified Jankov or, more precisely, frame formulas. It turned out that we can effectively use the canonical formulas for refutation.

In Section \ref{ref}, we give the background information regarding {\L}-deductive systems. In Section \ref{refInt}, we prove that every intermediate logic has an {\L}-deductive system defining it and having axioms from a given complete set of formulas. And in Section \ref{refK4}, we extend this result to the normal extensions of $\KF$.

%%%%%%%%%%%%%%%%%%%%%%%%%%%%%%%%%%%%%%%%%%%%%%%%%%%%%%%%%%%%%
\section{{\L}-Deductive Systems} \label{ref}

\subsection{Refutation Systems}

Commonly, we use a deducting system in order to prove a formula and we use semantical means in order to disprove a formula. But the rejection of a formula can also be established syntactically. For instance, by Modus Tollens we can derive that a formula $A$ is refutable if we prove $A \to B$ and disprove $B$.    

The idea, to include the rejected propositions into proofs can be traced back to R.~Carnap \footnote{And in traditional logic even to Aristotle and the Stoics.} \cite{Carnap_Formalization_1943}. But J.~{L}ukasiewicz was the first who constructed a deductive system for proving refutability \cite{Lukasiewicz_Book}.

In general, there are two ways of handling the refutation syntactically: direct and indirect. To determine weather a formula $A$ is refutable we can do one of the following

\begin{itemize}
\item derive in a meta-logic a statement about refutability of $A$ ({\L}-proof - {\L}ukasiewicz-style proof) 
\item derive from $A$ a formula $B$ that we already know is refutable (an anti-axiom) and then apply Modus Tollens (i-proof - indirect proof, Carnap's way)
\end{itemize} 

An existence of an {\L}-proof entails the existence of i-proof. The converse is true under some assumptions (some weak form of the deduction theorem \cite{Staszek_1971}).  

\subsubsection{Examples of i-complete systems} Let $\Frm$ be a set of (propositional) formulas and $\Sigma$ be a set of all simultaneous substitutions of formulas for (propositional) variables. Let $\vdash$ be a structural consequence relation, that is, for any finite set of formula $\Gamma \subseteq \Frm$ and any formulas $A,B \in \Frm$  the following holds
\begin{itemize}
\item[(a)] $A \vdash A$
\item[(b)] if $\Gamma \vdash A$, then $\Gamma, B \vdash A$
\item[(c)] if $\Gamma,A \vdash B$ and $\Gamma \vdash A$, then $\Gamma \vdash B$
\item[(d)] if $\Gamma \vdash A$, then $\sigma(\Gamma) \vdash \sigma(A)$ 
\end{itemize}

Given a consequence relation $\vdash$, we say that a pair of sets of formulas $\lbr \Ax^+; \Ax^- \rbr$ is an i-complete system for $\vdash$ if
\[
\vdash A \text{ if and only if } 
\]

\subsection{Definitions}

By $\Frm$ we denote the set of all (propositional) formulas in a given language containing $\impl$ among connectives.

A \textit{logic} is a subset $\LogL \subseteq \Frm$ closed under rules Modus Ponens and Substitution, i.e. for any $A,B \in \Frm$ and any $\sigma \in \Sigma$
\[
A, (A \impl B) \in \LogL \text{ entails } B \in \LogL \text{ and } \sigma(A) \in \LogL.
\]

We will assume that there is a class of models (algebras, matrices, etc.) $\Alg{M}$ and for every formula $A \in \Frm$ it is defined whether $A$ is valid in a given model $\alg{M}$ (in written $\alg{M} \vDash A$), or not (in written $\alg{M} \nvDash A$).

%%%%%%%%%%%%%%%%%%%%%%%%%%%%%%%%%%%%%%%%%%%%%%%%%
\subsection{{\L}-deductive Systems}
%%%%%%%%%%%%%%%%%%%%%%%%%%%%%%%%%%%%%%%%%%%%%%%%%

If $A \in \Frm$ is a formulas, than $\mpos A$ and $\mrej A$ are (atomic) statements. $\mpos A$ is a \textit{positive statement} (assertion) and $\mrej A$ is a \textit{negative statement} (rejection).  The set of all positive statements we denote by $\St^+$, the set of all negative statements we denote by $\St^-$, and $\St$ denotes the set of all statements, that is, $\St \bydef \St^+ \cup \St^-$. 

By {\L}-deductive system we understand a couple $\Ds$, where $\Ax \bydef \Ax^+ \cup \Ax^-$ and  $\Ax^+ \subseteq \St^+$ is a set of \textit{axioms}, $\Ax^- \subseteq \St^-$ is a set of \textit{anti-axioms}, and $\Rules$ is a set of the following rules:
\[
\begin{array}{lll}
\text{Modus Ponens} & \mpos(A \impl B),\mpos A / \mpos B & (MP)\\
\text{Substitution} &\mpos A/ \mpos \sigma(A), \text{ for all } \sigma \in \Sigma & (Sb)\\
\text{Modus Tolens} &\mpos(A \impl B),\mrej B / \mrej A & (MT)\\
\text{Reverse Substitution} &\mrej \sigma(A)/ \mrej A, \text{ for all } \sigma \in \Sigma & (RS)\\
\end{array}
\]

%%%%%%%%%%%%%%%%%%%%%%%%%%%%%%%%%%%%%%%%%%%%%%%%%
\subsection{{\L}-Inference}
%%%%%%%%%%%%%%%%%%%%%%%%%%%%%%%%%%%%%%%%%%%%%%%%%

In a natural way, every deduction system $\Ds$ defines an inference: if $\Gamma$ is a set of statements and $\alpha$ is a statement, a sequence $\alpha_1,\dots,\alpha_n$ of statements is an \textit{inference} of $\alpha$ from $\Gamma$ if $\alpha_n$ is $\alpha$ and for each $i \in \{1,\dots,n\}$ one of the following holds
\begin{itemize}
\item[(a)]  $\alpha_i \in \Ax$
\item[(b)]  $\alpha_i \in \Gamma$
\item[(c)] $\alpha_i$ can be obtained from the preceding statements by one of the rules.
\end{itemize}
If there exists an inference of $\alpha$ from $\Gamma$, we say that $\alpha$ is \textit{derivable in} $\DS$ \textit{from} $\Gamma$, and we denote this by $\Gamma \vdash_\DS \alpha$ (and we will omit index if no confusion arises). The \textit{length of an inference} is a number of statements in it. 

If $\mnu \in \{\mpos, \mrej\}$, then $\notmnu A$ is a statement with the sign opposite to $\mnu$, that is, if $\mnu = \mpos$, then $\notmnu = \mrej$ and if $\mnu = \mrej$, then $\notmnu = \mpos$.

\begin{prop} \label{posinfer} For any {\L}-deductive system $\DS$, if $\Gamma \subseteq \St^+$, $\alpha \in \St^+$ and $\Inf \bydef \alpha_1,\dots,\alpha_n,\alpha$ is an inference of $\alpha$ from $\Gamma$, then, omitting from $\Inf$ all negative statements, the obtained sequence $\Inf^+$ still will be an inference of $\alpha$ from $\Gamma$.
\end{prop}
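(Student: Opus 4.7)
The plan is to exploit the strict sign-discipline of the four rules: among $(MP), (Sb), (MT), (RS)$, the two rules that produce a positive conclusion, namely $(MP)$ and $(Sb)$, take only positive statements as premises, while the two rules producing a negative conclusion, $(MT)$ and $(RS)$, produce only negative statements. So negative statements can never feed into the derivation of a positive statement; they are a syntactic ``dead end'' from the perspective of any eventual positive conclusion.

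Concretely, I would let $\Inf^+$ be the subsequence of $\Inf$ consisting of the positive statements, preserving order, and verify by induction on the position $i$ in $\Inf$ that each positive $\alpha_i$ has a valid justification in $\Inf^+$. There are three cases: (a) $\alpha_i \in \Ax$; since $\alpha_i \in \St^+$, we have $\alpha_i \in \Ax^+ \subseteq \Ax$, so it is still an axiom in $\Inf^+$. (b) $\alpha_i \in \Gamma$; nothing to check, as $\Gamma \subseteq \St^+$. (c) $\alpha_i$ is obtained from earlier statements by a rule. Since $\alpha_i$ is positive, the rule applied can only be $(MP)$ or $(Sb)$, because $(MT)$ and $(RS)$ have negative conclusions. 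In either case, all premises of the rule are positive statements, hence they were not dropped when forming $\Inf^+$ and still occur before $\alpha_i$ there.

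Finally, since $\alpha \in \St^+$, the last element of $\Inf^+$ is still $\alpha$, so $\Inf^+$ is an inference of $\alpha$ from $\Gamma$ as required.

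The argument is essentially a bookkeeping observation; there is no real obstacle, only the need to be explicit that each of $(MP)$ and $(Sb)$ has positive premises, so that the induction step in case (c) goes through, and that $(MT), (RS)$ cannot be invoked to justify a positive statement. If the system were extended with a rule whose conclusion is positive but whose premises include a negative statement, this argument would fail; so the proposition is really a statement about the particular rule set of $\Rules$.
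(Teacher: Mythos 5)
Your argument is correct and rests on exactly the same key observation as the paper's proof: a positive statement can only be justified by $(MP)$ or $(Sb)$, whose premises are all positive and hence survive the deletion. The paper packages this as an induction on the length of the inference (truncating at the last positive statement before $\alpha$ and applying the hypothesis to that prefix), whereas you verify each position of $\Inf^+$ directly, but this is only a difference in bookkeeping, not in substance.
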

\begin{proof} Proof by induction on the length of $\Inf$.

\textbf{Basis}. If $\Inf$ contain a single statement $\alpha$, the inference already consists of only positive statements. 

\textbf{Inductive Hypothesis.} Assume that for all inferences of the length at most $m$ the statement is true.

\textbf{Step.} Let $\Inf$ be an inference of $\alpha$ from $\Gamma$ of the length $m+1$. By the definition of inference, either $\alpha \in \Ax \cup \Gamma$, or $\alpha$ is obtained by some rule from the preceding members of $\Inf$. If $\alpha \in \Ax$ or $\alpha \in \Gamma$, then the single-element sequence $\alpha$ is an inference. 

Suppose $\alpha$ is obtained by one of the rules. Since $\alpha$ is a positive statement, it can be obtained only by (MP) or (Sb). Let us consider these to cases.

\textit{A Case of (MP).} Let $\Inf \bydef \alpha_1,\dots,\alpha_m,\alpha$. Suppose $\alpha$ is obtained by (MP) and $\alpha = \mpos A$ for some $A \in \Frm$. Then, for some formula $B \in \Frm$, the statements $\mpos(B \impl A)$ and $\mpos B$ occur in $\Inf$. Assume $\mpos B = \alpha_i$ and $\mpos (B \impl A) = \alpha_j$ members of $\Inf$. Let $1 \leq k \leq m$ be the greatest index such that $\alpha_k \in \Inf$ and $\alpha_k$ is a positive statement (that is, all statements $\alpha_{k+1},\dots,\alpha_m$ are negative). Clearly, $1 \leq i,j \leq k$. Then, the first $k$ elements $\Inf$ form an inference $\Inf_k$ and $\Inf_k$ contains both of statements $\mpos B$ and $\mpos (B \impl A)$. By the inductive hypothesis, we can omit in $\Inf_k$ all negative statements and obtain a new inference $\Inf_k^+$ that contains only positive statements. It is easy to see that the statements $\mpos B$ and $\mpos (A \impl B)$ are members of $\Inf^+_k$. Hence, we can add to $\Inf^+_k$ the statement $\mpos A$ and obtain an inference of $\alpha$ from $\Gamma$. Note, that obtained inference is exactly an inference obtained from $\Inf$ by omitting all negative statements.    

\textit{A Case of (Sb).}  This case can be considered in the way similar to the case of (MP).
\end{proof}

\begin{cor}  For any {\L}-deductive system $\DS \bydef \lbr \Ax, \Rules \rbr$, if $\Gamma \subseteq \St^+$, $\alpha \in \St^+$ and $\Gamma \vdash_\DS \mpos \alpha$, then there is an inference of $\alpha$ from $\Gamma$ containing only the positive statements.
\end{cor}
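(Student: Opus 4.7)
The plan is to invoke Proposition \ref{posinfer} directly, essentially as a one-step reformulation once the definition of derivability is unfolded. From the hypothesis $\Gamma \vdash_\DS \mpos \alpha$, by the definition of the inference relation given in Section 2.4, there exists at least one inference $\Inf \bydef \alpha_1, \dots, \alpha_n$ of the positive target statement from $\Gamma$. Since by assumption $\Gamma \subseteq \St^+$ and the conclusion is in $\St^+$, the hypotheses of Proposition \ref{posinfer} are satisfied for this particular $\Inf$.

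Applying the proposition to $\Inf$, I obtain the sequence $\Inf^+$ resulting from $\Inf$ by deleting every negative statement; the proposition guarantees that $\Inf^+$ is itself a valid inference of the same target statement from $\Gamma$. By construction $\Inf^+$ contains only positive statements, which is exactly the claim. There is no real obstacle here, since the substantive combinatorial work (showing that the stripped sequence remains a legitimate inference, in particular that each (MP) and (Sb) application is still supported by earlier positive members) has already been carried out in the proof of Proposition \ref{posinfer}; the corollary is simply the existential wrapper around that statement.
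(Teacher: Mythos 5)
Your proof is correct and is exactly the intended argument: the paper states this corollary without proof precisely because it is the immediate existential consequence of Proposition \ref{posinfer} (unfold $\Gamma \vdash_\DS \mpos\alpha$ into a witnessing inference, apply the proposition, and note the resulting sequence is all-positive). No discrepancy with the paper's approach.
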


%%%%%%%%%%%%%%%%%%%%%%%%%%%%%%%%%%%%%%%%%%%%%%%%%%%%%%%%%%%%%%%%%%%
\subsection{Coherent and Full  {\L}-deductive Systems}
%%%%%%%%%%%%%%%%%%%%%%%%%%%%%%%%%%%%%%%%%%%%%%%%%%%%%%%%%%%%%%%%%%%

\begin{definition} {\L}-deductive system $\Ds$ we will call \textit{coherent} if for no $A \in \Frm$ 
\[
\vdash_\DS \mpos A \text{ and } \vdash_\DS \mrej A.
\] 
And we will call $\DS$ \textit{full} if for every $A \in \Frm$
\[
\vdash_\DS \mpos A \text{ or } \vdash_\DS \mrej A.
\]
A coherent and full system will be called \textit{standard}.
\end{definition}

If $A \in \Frm$ is a formula and $\alg{M}$ is a model we let
\begin{equation}
\alg{M} \vDash \mpos A \means \alg{M} \vDash A \text{ and } \alg{M} \vDash \mrej A \means \alg{M} \nvDash A.
\end{equation}
If $\alg{M} \vDash \mnu A$ we say that the statement $\mnu A$ \textit{is valid in} $\alg{M}$.

We say that a model $\alg{M}$ is an \textit{adequate regular model} for an {\L}-deductive system $\DS$, if for every $A \in \Frm$
\begin{equation}
\vdash_\DS \mnu A \text{ if and only if } \alg{M} \vDash \mnu A.
\end{equation}
 
It is not hard to see that the following holds.

\begin{prop}\label{standard}
If a given {\L}-deductive system $\DS$ has an adequate regular model, then the system $\DS$ is standard.
\end{prop}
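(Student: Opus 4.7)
The plan is to unpack the definition of \emph{standard} into its two constituent requirements, coherence and fullness, and then derive each directly from the adequacy condition. The whole argument rests on a tacit bivalence at the semantic level that is built into the notation already introduced: for every model $\alg{M}$ and every formula $A$, exactly one of $\alg{M} \vDash A$ and $\alg{M} \nvDash A$ holds.

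For coherence, I would argue by contradiction. Suppose, for the sake of contradiction, that some $A \in \Frm$ satisfies both $\vdash_\DS \mpos A$ and $\vdash_\DS \mrej A$. Applying the adequacy of the regular model $\alg{M}$ in the direction from syntax to semantics twice, once to each statement, we obtain $\alg{M} \vDash \mpos A$ and $\alg{M} \vDash \mrej A$. Unfolding the defining equations for the validity of signed statements from the previous display then gives $\alg{M} \vDash A$ and $\alg{M} \nvDash A$ simultaneously, which contradicts the exclusivity of the satisfaction relation.

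For fullness, I would run the same argument in the reverse direction. Fix an arbitrary $A \in \Frm$. By bivalence of $\vDash$, at least one of $\alg{M} \vDash A$ and $\alg{M} \nvDash A$ holds, i.e.\ at least one of $\alg{M} \vDash \mpos A$ and $\alg{M} \vDash \mrej A$ holds. Adequacy, this time applied in the direction from semantics to syntax, transports this disjunction to $\vdash_\DS \mpos A$ or $\vdash_\DS \mrej A$, which is fullness.

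I do not expect any real obstacle here; the proposition is essentially a semantic repackaging of the adequacy hypothesis, which is why the author flagged it as easy. The only point worth making explicit in the write-up is the appeal to bivalence and exclusivity of $\vDash$ versus $\nvDash$, since these are the two properties that respectively supply fullness and coherence after the adequacy translation.
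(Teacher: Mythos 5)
Your argument is correct and is exactly the intended one: the paper states this proposition without proof ("It is not hard to see\dots"), and your unpacking of adequacy into its two directions---using exclusivity of $\vDash$/$\nvDash$ for coherence and bivalence for fullness---is the natural filling-in of that gap. Nothing is missing.
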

In this paper, we consider only regular models.

Let us also observe that in order to prove that a model $\alg{M}$ is adequate for a given {\L}-deductive system $\Ds$ as long as all axioms and anti-axioms are valid in $\alg{M}$.

\begin{prop} \label{adequate} Let $\alg{M}$ be a model and $\Ds$ be an {\L}-deductive system. If 
\[
\text{ for every } A \in \Ax, \alg{M} \vDash \mnu A,
\] 
then $\alg{M}$ is adequate for $\DS$.
\end{prop}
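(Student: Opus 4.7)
The plan is an induction on the length of $\DS$-inferences, showing that every derivable statement is valid in $\alg{M}$; together with the standing regularity of $\alg{M}$, this yields the biconditional defining adequacy.

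The base case is immediate from the hypothesis: every axiom or anti-axiom in $\Ax$ is valid in $\alg{M}$. For the inductive step, I would check each of the four rules in turn, organized as two dual pairs. The positive rules $(MP)$ and $(Sb)$ rely on a structural property of the models in play (algebras or Kripke frames for intermediate logics and normal extensions of $\KF$): the set of formulas valid in $\alg{M}$ is closed under modus ponens and under substitution. The negative rules $(MT)$ and $(RS)$ then follow by contraposition inside $\alg{M}$. For $(MT)$: if $\alg{M} \vDash \mpos(A \impl B)$ and $\alg{M} \vDash \mrej B$, i.e., $\alg{M} \nvDash B$, then assuming $\alg{M} \vDash A$ would yield $\alg{M} \vDash B$ by closure under modus ponens, a contradiction, so $\alg{M} \vDash \mrej A$. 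The analysis of $(RS)$ is analogous, with closure under substitution playing the role of modus ponens.

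The converse implication packaged into adequacy is then extracted from regularity combined with the symmetry between the positive and negative halves of $\Rules$: since $\alg{M}$ assigns exactly one of $\mpos A$, $\mrej A$ to each formula $A$, if $\alg{M} \vDash \mnu A$ but $\nvdash_\DS \mnu A$, then by fullness of $\DS$ one would have $\vdash_\DS \notmnu A$, and soundness applied on the opposite sign would give $\alg{M} \vDash \notmnu A$, clashing with regularity. The main obstacle sits precisely in this last step: soundness on both signs is a mechanical check once the rule analysis above is carried out, but turning it around to read as the ``if'' direction of adequacy uses fullness of $\DS$, which is not part of the bare hypothesis of the proposition but which, in the applications of the paper, is guaranteed by drawing axioms and anti-axioms from a complete set of formulas for the intended logic.
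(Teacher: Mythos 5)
Your soundness induction is precisely the paper's own proof: the paper disposes of this proposition in one sentence, asserting that every rule preserves validity of statements in $\alg{M}$, and your case analysis (with $(MT)$ and $(RS)$ handled by contraposition from closure of the set of formulas valid in $\alg{M}$ under modus ponens and substitution) is just the honest expansion of that sentence. So for the half of adequacy that the paper actually argues, you and the paper coincide.

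Your unease about the converse half is well founded, and it points at a defect of the proposition as stated rather than at a gap you introduced. Adequacy is defined as the biconditional $\vdash_\DS \mnu A$ iff $\alg{M} \vDash \mnu A$, but the hypothesis (all axioms valid) plus soundness cannot give the direction from validity to derivability: with $\Ax = \emptyset$ nothing is derivable while many statements are valid in any model. Your repair via fullness is the natural one, and you are right that fullness is not among the hypotheses; it also cannot be, since it fails in the example just given. Note, however, that in the paper's sole application of Proposition \ref{adequate} --- the coherence step (c) in the proof of Theorem \ref{ThCompl}, routed through Proposition \ref{standard} --- only the soundness direction is used: if both $\mpos A$ and $\mrej A$ were derivable, both would be valid in $\alg{M}(\Logl)$, which is impossible since $\mrej A$ means $\alg{M} \nvDash A$. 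So the correct reading is either to weaken ``adequate'' in this proposition to the soundness direction alone, or to add fullness of $\DS$ as a hypothesis as you propose; in either case your argument is at least as complete as the paper's, and your diagnosis of where the ``if'' direction really comes from is accurate.
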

\begin{proof} The proof can be done by induction on the length of inference. Indeed, all rules preserve the validity of statements, i.e. if the premisses of a rule are valid in $\alg{M}$, then the conclusion is valid too. 
\end{proof}

%%%%%%%%%%%%%%%%%%%%%%%%%%%%%%%%%%%%%%%%%%%%%%%%%
\subsection{Logics Defined by {\L}-deductive Systems}
%%%%%%%%%%%%%%%%%%%%%%%%%%%%%%%%%%%%%%%%%%%%%%%%%

Every given deductive system $\DS$ defines the pair 
\[
\lbr \LogL^+,\LogL^- \rbr \text{, where } \LogL^+ \bydef \set{A \in \Frm}{\vdash_\DS \mpos A} \text{ and } \LogL^- \bydef \set{A \in \Frm}{\vdash_\DS \mrej A}
\]
that we call a \textit{logic}. The logic defined by a given {\L}-deductive system $\DS$ we will denote by $\Logl(\DS)$. 

We say that a logic $\ulog$ is \textit{coherent}, \textit{full} or \textit{standard} if the defining {\L}-deductive system is coherent, full or, respectively, standard. It is easy to see that a logic $\Logl$  is coherent if and only if $\LogL^+ \cap \LogL^- = \emptyset$; logic $\Logl$ is full if and only if $\LogL^+ \cup \LogL^- = \Frm$; and logic $\Logl$ is standard if and only if $\LogL^- = \Frm \setminus \LogL^+$. 

A logic is said to be \textit{finitely {\L}-axiomatizable} if it can be defined by an {\L}-deductive system with the finite set of axioms.

Any pair $\ulog$, where $\LogL^+,\LogL^- \subseteq \Frm$ and $\LogL^+$ is closed under (MP) and (SB) and $\LogL^-$ is closed under (MT) and (RS), is a logic. Indeed, $\ulog$ can be defined by an {\L}-deductive system system in which
\[
\Ax = \set{\mpos A}{A \in \LogL^+} \cup \set{\mrej A}{A \in \LogL^-}.
\]

Recall that a couple $\alg{M} (\Logl) \bydef \lbr \Frm, \LogL^+\rbr$, where $\LogL^+$ is a set of designated values,  is a \textit{Lindenbaum matrix} of a logic $\ulog$.

\begin{prop}
If $\Ds$ is a standard {\L}-deductive system, the Lindenbaum matrix of $\alg{M}(\Logl(\DS))$ is an adequate model of $\DS$.
\end{prop}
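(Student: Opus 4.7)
The plan is to unfold both sides of the adequacy biconditional and observe that they line up immediately once one knows the Lindenbaum matrix semantics and the standardness of $\DS$. Recall that a model $\alg{M}$ is adequate for $\DS$ when, for every $A \in \Frm$ and $\mnu \in \{\mpos,\mrej\}$, we have $\vdash_\DS \mnu A$ iff $\alg{M} \vDash \mnu A$. For the Lindenbaum matrix $\alg{M}(\Logl(\DS)) = \lbr \Frm, \LogL^+ \rbr$, the set of designated values is $\LogL^+$, so $\alg{M}(\Logl(\DS)) \vDash A$ is by definition the same as $A \in \LogL^+$.

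The positive case is then immediate: by the convention fixing the meaning of $\vDash \mpos A$, we have $\alg{M}(\Logl(\DS)) \vDash \mpos A$ iff $\alg{M}(\Logl(\DS)) \vDash A$ iff $A \in \LogL^+$, and by the definition of $\Logl(\DS)$ this is the same as $\vdash_\DS \mpos A$. So I would dispatch this direction in a single line.

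The negative case is where the hypothesis that $\DS$ is standard does the real work, and this is the only step that needs any thought. Unfolding the semantics, $\alg{M}(\Logl(\DS)) \vDash \mrej A$ iff $\alg{M}(\Logl(\DS)) \nvDash A$ iff $A \notin \LogL^+$. The conclusion we want is $\vdash_\DS \mrej A$, i.e.\ $A \in \LogL^-$. The equivalence $A \notin \LogL^+ \Leftrightarrow A \in \LogL^-$ is precisely the characterization of standard logics recorded right before the proposition: coherence gives $\LogL^+ \cap \LogL^- = \emptyset$ and fullness gives $\LogL^+ \cup \LogL^- = \Frm$, so $\LogL^- = \Frm \setminus \LogL^+$. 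Combining, $\alg{M}(\Logl(\DS)) \vDash \mrej A$ iff $A \in \LogL^-$ iff $\vdash_\DS \mrej A$, finishing the proof.

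I expect no real obstacle here; the proposition is essentially a bookkeeping lemma, and the only non-trivial observation is that without standardness the negative direction fails (e.g.\ if $\DS$ is not full, then some $A$ is neither in $\LogL^+$ nor in $\LogL^-$, yet the Lindenbaum matrix still refutes $A$, so $\alg{M}(\Logl(\DS)) \vDash \mrej A$ while $\not\vdash_\DS \mrej A$). I would likely flag this remark briefly at the end to motivate why both halves of standardness are being used.
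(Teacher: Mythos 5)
Your proof is correct, but it takes a different route from the paper's. The paper does not verify the adequacy biconditional directly: it invokes its Proposition \ref{adequate}, which reduces adequacy to checking that every axiom and anti-axiom of $\DS$ is valid in $\alg{M}(\Logl(\DS))$, and then performs exactly that check (if $\mpos A \in \Ax$ then $A \in \LogL^+$, hence valid; if $\mrej A \in \Ax$ then $A \in \LogL^-$, hence refuted by the matrix). You instead unfold both sides of the biconditional for an arbitrary formula and use standardness explicitly in the negative case. Two remarks on the comparison. First, your identification of $\alg{M}(\Logl(\DS)) \vDash A$ with $A \in \LogL^+$ is not quite ``by definition'': validity in a matrix quantifies over all valuations, which for the Lindenbaum matrix are the substitutions, so $\alg{M}(\Logl(\DS)) \vDash A$ means $\sigma(A) \in \LogL^+$ for all $\sigma$; you need the closure of $\LogL^+$ under substitution (which the paper flags explicitly in its positive case) to collapse this to $A \in \LogL^+$, and dually for the negative case. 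Second, your route arguably buys more than the paper's: the induction behind Proposition \ref{adequate} only yields the soundness direction ($\vdash_\DS \mnu A$ implies $\alg{M} \vDash \mnu A$), whereas adequacy as defined is a biconditional; your argument supplies the converse direction explicitly and makes visible that standardness is exactly what is needed for it, including your closing observation that fullness fails to be dispensable. So your proof is the more self-contained of the two, at the cost of redoing by hand what the paper delegates to a general lemma.
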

\begin{proof} Due to Proposition \ref{adequate} it suffices to check that all axioms of $\DS$ are valid in $\alg{M}(\Logl(\DS))$. 

Suppose $\mpos A \in \Ax$. Then $A \in \LogL^+$. Recall that $\LogL^+$ is closed under substitutions. Hence, $\alg{M}(\Logl(\DS)) \vDash \mpos A$.  

Suppose $\mrej A \in \Ax$. Then $A \in \LogL^-$. Hence, $\alg{M}(\Logl(\DS)) \vDash \mrej A$.
\end{proof}

%%%%%%%%%%%%%%%%%%%%%%%%%%%%%%%%%%%%%%%%%%%%%%%%%
\subsection{The Theorem about Symmetry in \textbf{ExtInt}}
%%%%%%%%%%%%%%%%%%%%%%%%%%%%%%%%%%%%%%%%%%%%%%%%%

From this point forward we consider only the deductive systems $\DS$ in which $\vdash_\DS \mpos(A \impl (B \impl A))$ for all $A,B \in \Frm$.

The meaning of the following theorem is very straightforward: if we cannot derive a formula $A$ in a given regular deductive system, but can derive it from the some set of formulas $\Gamma$, then $\Gamma$ contains a formula $B$ not derivable in the system and, moreover, $\mrej B$ and be {\L}-derived from $\mrej A$. In a way, the following theorem can be regarded as a strengthening of Modus Tollens.   

\begin{Theorem}[about symmetry in \textbf{ExtInt}] \label{SymTh} For any {\L}-deductive system $\Ds$ and any $A_1,\dots,A_n,B \in  \Frm$ if
\[
\nvdash_\DS \mpos B \text{ and } \mpos A_1,\dots,\mpos A_n \vdash_\DS \mpos B,
\] 
then 
\[
\mrej B \vdash_\DS \mrej A_i
\] 
for some $1 \leq i \leq n$.
\end{Theorem}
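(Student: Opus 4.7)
The plan is to reduce to a purely positive inference by invoking the corollary to Proposition~\ref{posinfer} and then to induct on the length of that inference. Fix a positive inference $\Inf = \alpha_1,\dots,\alpha_m = \mpos B$ of $\mpos B$ from $\mpos A_1,\dots,\mpos A_n$; since only positive statements occur, only (MP) and (Sb) can produce them. The base case $m = 1$ is immediate: $\mpos B$ is either an axiom, contradicting $\nvdash_\DS \mpos B$, or a hypothesis $\mpos A_i$, in which case $\mrej B \vdash_\DS \mrej A_i$ is trivial.

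For the inductive step I analyze the last rule. If $\alpha_m$ comes from (Sb) with $B = \sigma(B')$ for some earlier $\mpos B'$, then $\nvdash_\DS \mpos B'$ (otherwise $\vdash_\DS \mpos B$ would follow by (Sb)); the inductive hypothesis applied to the shorter sub-inference supplies $\mrej B' \vdash_\DS \mrej A_i$, and a single (RS) step converting $\mrej B = \mrej \sigma(B')$ to $\mrej B'$ concludes this branch. If $\alpha_m$ comes from (MP) applied to $\mpos C$ and $\mpos(C \impl B)$, I split on whether each of these is derivable from axioms alone. Simultaneous $\vdash_\DS \mpos C$ and $\vdash_\DS \mpos(C \impl B)$ would force $\vdash_\DS \mpos B$, so this sub-case is vacuous. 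If $\vdash_\DS \mpos(C \impl B)$ but $\nvdash_\DS \mpos C$, then (MT) gives $\mrej B \vdash_\DS \mrej C$ and the inductive hypothesis on the sub-inference of $\mpos C$ finishes. If $\vdash_\DS \mpos C$ but $\nvdash_\DS \mpos(C \impl B)$, then the intuitionistic theorem $C \impl ((C \impl B) \impl B)$, available over the $\Int$ base implicit in the $\Ext\Int$ context, combines with $\vdash_\DS \mpos C$ via (MP) to give $\vdash_\DS \mpos((C \impl B) \impl B)$; then (MT) yields $\mrej B \vdash_\DS \mrej(C \impl B)$, and the inductive hypothesis on the other sub-inference closes the case.

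The residual sub-case — neither $\mpos C$ nor $\mpos(C \impl B)$ derivable from axioms alone — is where I expect the main work. The inductive hypothesis still supplies chains $\mrej C \vdash_\DS \mrej A_i$ and $\mrej(C \impl B) \vdash_\DS \mrej A_j$, but $\mrej B$ has no single (MT) step leading to either $\mrej C$ or $\mrej(C \impl B)$, because the requisite positive theorems $\mpos(C \impl B)$ and $\mpos((C \impl B) \impl B)$ are both unavailable. To break the deadlock I plan to refine the induction by tracking the sub-tuples of hypotheses actually used in the two sub-inferences, and then to exploit the observation that any chain of (MT) and (RS) applications reverses to a positive derivation built from (MP) and (Sb). Splicing the two reversed chains through the original positive inference of $\mpos B$ — using the intuitionistic deduction theorem to collapse auxiliary substitution instances — should yield the single negative chain from $\mrej B$ to some $\mrej A_k$. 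Making this splicing precise and verifying that the induction measure decreases is the step I anticipate will carry most of the technical difficulty.
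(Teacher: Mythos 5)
Your reduction to a positive inference, your base case, and your (Sb) case all match the paper's argument, as do your first two (MP) sub-cases. The genuine gap is in what you call the residual sub-case. When $\nvdash_\DS \mpos(C \impl B)$ you do not need to know anything about the derivability of $\mpos C$: the standing assumption introduced just before the theorem is that $\vdash_\DS \mpos(A \impl (B \impl A))$ for all $A,B$, so in particular $\vdash_\DS \mpos(B \impl (C \impl B))$. Applying (MT) to this statement and $\mrej B$ gives $\mrej B \vdash_\DS \mrej(C \impl B)$ unconditionally, and the inductive hypothesis applied to the sub-inference of $\mpos(C \impl B)$ then yields $\mrej(C \impl B) \vdash_\DS \mrej A_i$, closing the case. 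Your assertion that the requisite positive theorem is ``unavailable'' overlooks exactly this weakening instance; with it, your third and fourth sub-cases collapse into one, and your third sub-case no longer needs the extra theorem $C \impl ((C \impl B) \impl B)$, which the standing hypothesis does not grant anyway.

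The splicing machinery you sketch for the residual sub-case is not a viable substitute. A chain of (MT) and (RS) steps does not in general ``reverse'' into a positive derivation (the (MT) step consumes a positive theorem $\mpos(A \impl B)$ that need not itself be recoverable from the negative chain), and tracking sub-tuples of hypotheses does not obviously produce a decreasing induction measure. As written, the proof is incomplete at precisely the point you flagged; the fix is the single observation above, not additional machinery.
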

\begin{proof}
We will prove the claim by induction on the length of inference of $\mpos B$ from $\mpos A_1,\dots,\mpos A_n$. By virtue of the Proposition \ref{posinfer} we can safely assume that the inference consists of only positive statements.

\textbf{Basis.} Suppose there is an inference of $\mpos B$ from $\mpos A_1,\dots,\mpos A_n$ of the length 1. Then, by the definition of inference, $\mpos B = \mpos A_i$ for some $1 \leq i \leq n$, for $\mpos B \notin \Ax$, due to $\nvdash_\DS \mpos B$. Hence, $\mrej B \vdash_\DS \mrej A_i$.

\textbf{Inductive Hypothesis.} Assume that if there is an inference of the length at most $m$ of $\mpos B$ from $\mpos A_1,\dots \mpos A_n$, then $\mrej B \vdash_\DS \mrej A_i$ for some $1 \leq i \leq n$.

\textbf{Inductive Step.} Let $\mpos B_1,\dots,\mpos B_m,\mpos B$ be an inference of $\mpos B$ from $\mpos A_1,\dots,\mpos A_n$. The cases (a) and (b) from the definition of inference can were considered in the basis of induction. Let us assume that the statement $\mpos B$ is obtained by one of the rules. Due to this statement is positive, it can be obtained only by (MP) or (Sb).

\textbf{The case of (MP).} Suppose $B_j = \mpos (C \impl B)$ and $B_k = \mpos C$, where $1 \leq j,k \leq m$. There are two possible subcases: 
\begin{itemize}
\item[(a)] $\vdash_\DS \mpos (C \impl B)$ ;
\item[(b)] $\nvdash_\DS \mpos (C \impl B)$. 
\end{itemize}

\textbf{Subcase (a).} Suppose $\vdash_\DS \mpos (C \impl B)$. Then, $\nvdash_\DS \mpos C$, for $\nvdash_\DS \mpos B$. Note, that the sequence $\mpos B_1, \dots,\mpos B_k$ is an inference of $\mpos C$ from $\mpos A_1,\dots,\mpos A_n$ and $1 \leq k \leq m$. Hence,by the induction hypothesis, 
\begin{equation}
\mrej C \vdash_\DS \mrej A_i, \text{ for some } 1 \leq i \leq n. \label{suba-1}
\end{equation}
On the other hand, we can apply (MT) to $\vdash_\DS \mpos (C \impl B)$ and $\mrej B$ and obtain
\begin{equation}
\mrej B \vdash_\DS \mrej C. \label{suba-2}
\end{equation}
And from \eqref{suba-1} and \eqref{suba-2} we can derive
\begin{equation}
\mrej B \vdash_\DS \mrej A_i, \text{ for some } 1 \leq i \leq n. 
\end{equation}

\textbf{Subcase (b).} Suppose $\nvdash_\DS \mpos (C \impl B)$. Then, we observe that $\mpos B_1,\dots, \mpos B_j$ is an inference of $\mpos (C \impl B)$ from $\mpos A_1,\dots,\mpos A_n$ and $1 \leq i \leq m$. So, we can apply the induction hypothesis and get
\begin{equation}
\mrej(C \impl B) \vdash_\DS \mrej A_i \text{ for some } 1 \leq i \leq n. \label{subb-1}
\end{equation}
On the other hand, we can apply (MT) to $\vdash \mpos (B \impl (C \impl B))$ and $\mrej B$ and obtain
\begin{equation}
\mrej B \vdash_\DS \mrej (C \impl B).  \label{subb-2}
\end{equation}
And from \eqref{subb-1} and \eqref{subb-2} we can derive
\begin{equation}
\mrej B \vdash_\DS \mrej A_i, \text{ for some } 1 \leq i \leq n. 
\end{equation}

\textbf{The case of (Rs).} Suppose $B = \sigma(B_j)$, where $1 \leq j \leq m$. Then $\nvdash_\DS \mpos B_j$, for $\nvdash_\DS \mpos B$. Also, note that $\mpos B_1,\dots,\mpos B_j$ is an inference of $B_j$ from $\mpos A_1,\dots,\mpos A_n$ and $1 \leq j \leq m$. Hence, by the induction hypothesis,
\begin{equation} 
\mrej B_j \vdash_\DS \mrej A_i \text{ for some } 1\leq i \leq n. \label{caseRS-1} 
\end{equation}
On the other hand, $\mrej B = \mrej \sigma(B_j)$ and from $\mrej \sigma(B_j)$, by (RS), we have
\begin{equation}
\mrej B \vdash_\DS \mrej B_j. \label{caseRS-2}
\end{equation}
From \eqref{caseRS-1} and \eqref{caseRS-2} we have

\begin{equation} 
\mrej B \vdash_\DS \mrej A_i \text{ for some } 1\leq i \leq n. 
\end{equation}

\end{proof}

%%%%%%%%%%%%%%%%%%%%%%%%%%%%%%%%%%%%%%%%%%%%%%%%%%%%%%%%%%%%%%%%%%%
\section{Refutation in Ext\textbf{Int}} \label{refInt}
%%%%%%%%%%%%%%%%%%%%%%%%%%%%%%%%%%%%%%%%%%%%%%%%%%%%%%%%%%%%%%%%%%%

If $\Gamma \subseteq \Frm$ and $A \in \Frm$, then by $\Gamma \Vdash A$ we denote that $A$ is derivable from $\Gamma$ in Intuitionistic Propositional Calculus (IPC) with substitution (e.g, \cite[Section 7.1.3]{Heyting_Book_1966}). $\Int + \Gamma$ will denote a logic axiomatized over $\Int$ by $\Gamma$, that is $\Int + \Gamma \bydef \set{A \in \Frm}{\Gamma \Vdash A}$. And $\Gamma + A$ means the same as $\Gamma + \{A\}$.

A set $\alg{F}$ of formulas is said to be \textit{complete} \cite{Zakharyashchev_Syntax_1988} (or \textit{sufficiently rich} \cite{Tomaszewski_PhD}) if every logic from $\Ext\Int$ can be axiomatized over $\Int$ by some formulas from $\alg{F}$. An obvious characterization of completeness can be given by the following Proposition:

\begin{prop} \label{interder} A set of formulas $\alg{F}$ is complete if and only if for each formula $A$ such that $\Int \not\Vdash A$ there are formulas $A_1,\dots,A_n \in \alg{F}$ and
\begin{equation}
A_1,\dots,A_n \Vdash A \text{ and } A \Vdash A_i \text{ for all } i=1,\dots,n. \label{compl}
\end{equation} 
\end{prop}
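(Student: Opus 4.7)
The plan is to handle the two directions separately, using that $\Vdash$ is a finitary consequence relation so that derivations use only finitely many premises.

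For the ``only if'' direction, assume $\alg{F}$ is complete and let $A$ be a formula with $\Int \not\Vdash A$. Consider the logic $\LogL \bydef \Int + A \in \Ext\Int$. By completeness, there is some $\alg{G} \subseteq \alg{F}$ with $\LogL = \Int + \alg{G}$. Since $A \in \LogL$, i.e.\ $\alg{G} \Vdash A$, by finitarity of $\Vdash$ there exist $A_1,\dots,A_n \in \alg{G} \subseteq \alg{F}$ with $A_1,\dots,A_n \Vdash A$. Conversely, each $A_i \in \alg{G} \subseteq \LogL = \Int + A$, hence $A \Vdash A_i$.

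For the ``if'' direction, let $\LogL \in \Ext\Int$ be arbitrary. For each $A \in \LogL$ with $\Int \not\Vdash A$, the hypothesis gives formulas $A_1^A,\dots,A_{n_A}^A \in \alg{F}$ satisfying \eqref{compl}. Put
\[
\Gamma \bydef \bigcup_{A \in \LogL,\, \Int \not\Vdash A} \{A_1^A,\dots,A_{n_A}^A\} \subseteq \alg{F}.
\]
From $A \Vdash A_i^A$ and $A \in \LogL$ we get $\Gamma \subseteq \LogL$, so $\Int + \Gamma \subseteq \LogL$. Conversely, given $A \in \LogL$, either $\Int \Vdash A$ (hence $A \in \Int + \Gamma$ trivially), or the relevant $A_1^A,\dots,A_{n_A}^A$ belong to $\Gamma$ and derive $A$. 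Thus $\LogL = \Int + \Gamma$, witnessing that $\LogL$ is axiomatizable over $\Int$ by formulas from $\alg{F}$.

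No step is really an obstacle; the only delicate point is remembering to invoke finitarity of $\Vdash$ in the ``only if'' direction so that one can extract a \emph{finite} tuple $A_1,\dots,A_n$ from a possibly infinite axiomatization $\alg{G}$. The equivalence \eqref{compl} is then symmetric in precisely the sense needed: $A$ and the finite set $\{A_1,\dots,A_n\}$ axiomatize the same extension of $\Int$.
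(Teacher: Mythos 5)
Your proof is correct and follows essentially the same route as the paper: the ``if'' direction is the routine observation that every $\Int + A$ is then axiomatizable from $\alg{F}$, and the ``only if'' direction considers the logic $\Int + A$ and extracts formulas $A_1,\dots,A_n \in \alg{F}$ axiomatizing it. You are in fact slightly more careful than the paper, which jumps directly to a finite axiomatization $\LogL = \Int + \{A_1,\dots,A_n\}$ without explicitly invoking finitarity of $\Vdash$ to pass from a possibly infinite $\alg{G} \subseteq \alg{F}$ to a finite tuple.
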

\begin{proof} Clearly, if \eqref{compl} holds, every logic from $\Ext\Int$ can be axiomatized over $\Int$ by some formulas from $\alg{F}$.

Conversely, if $\alg{F}$ is a complete set, we can consider a logic $\LogL \bydef \Int + A$ axiomatized over $\Int$ by formula $A$. By the definition of completeness, for some $A_1.\dots,A_n \in \alg{F}$ we have $\LogL = \Int + \{A_1,\dots,A_n\}$, from which \eqref{compl} immediately follows.   
\end{proof}

Perhaps, the best known complete set of formulas is a set of canonical formulas introduced by M.~Zakharyaschev (cf. \cite{Chagrov_Zakh} for definitions, references and history). For our purposes it is important only that canonical formulas satisfy  \eqref{compl}  (cf. \cite[Theorem 9.44(i)]{Chagrov_Zakh}) and, thus, they form a complete set. 

By $\mathsf{IPL}$ we denote the intuitionistic propositional logic, that is, $\mathsf{IPL} \bydef\set{A \in \Frm}{\Vdash A}$. 

We say that $\ulog$ is a \textit{standard intermediate logic} if $\mathsf{IPL} \subseteq \LogL^+ \subset \Frm$, and $\LogL$ is closed under rules Modus Ponens and Substitution. 

%%%%%%%%%%%%%%%%%%%%%%%%%%%%%%%%%%%%%%%%%%%%%%%%%%%%%%%%%%%%%%
\subsection{Completeness Theorem}

By $\Ax^i$ we will denote the set of positive statements obtained from the axioms of IPC. And by $\Frm^c$ we denote a given complete set of all formulas (for instance, a set of all canonical formulas). 

Let us note the following.

\begin{prop}\label{infconv} Assume $A_1,\dots,A_n,B \in \Frm$ and $\Ds$ is such an {\L}-deductive system that $\Ax^i \subseteq \Ax$. Then
\[
A_1,\dots,A_n \Vdash B, \text{ entails } \mpos A_1, \dots, \mpos A_n \vdash_\DS \mpos B.
\]
\end{prop}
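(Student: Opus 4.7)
The plan is to proceed by induction on the length of an IPC-derivation of $B$ from $A_1,\dots,A_n$, showing that each step of such a derivation can be matched by a corresponding step of an {\L}-inference of $\mpos B$ from $\mpos A_1,\dots,\mpos A_n$ in $\DS$. The argument is essentially a direct simulation: the rules (MP) and (Sb) of an {\L}-deductive system are designed to mirror Modus Ponens and Substitution on positive statements, and by hypothesis $\Ax^i \subseteq \Ax$ supplies the positive statements corresponding to the axioms of IPC.

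For the base case, a one-step derivation of $B$ from $A_1,\dots,A_n$ in IPC means either $B$ is one of the $A_i$, in which case $\mpos B = \mpos A_i$ and $\mpos A_1,\dots,\mpos A_n \vdash_\DS \mpos B$ holds trivially by clause (b) of the definition of inference, or $B$ is an axiom of IPC, in which case $\mpos B \in \Ax^i \subseteq \Ax$, so $\vdash_\DS \mpos B$ by clause (a).

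For the inductive step, consider the last rule applied in the IPC-derivation of $B$. If $B$ is obtained by Modus Ponens from some $C$ and $C \impl B$ derived earlier, the inductive hypothesis gives us {\L}-inferences witnessing $\mpos A_1,\dots,\mpos A_n \vdash_\DS \mpos C$ and $\mpos A_1,\dots,\mpos A_n \vdash_\DS \mpos (C \impl B)$; concatenating these two inferences and appending $\mpos B$ obtained by the rule (MP) of $\DS$ yields the required {\L}-inference. If $B = \sigma(C)$ is obtained by Substitution from some previously derived $C$, the inductive hypothesis supplies an {\L}-inference witnessing $\mpos A_1,\dots,\mpos A_n \vdash_\DS \mpos C$, and a single application of the rule (Sb) then yields $\mpos \sigma(C) = \mpos B$.

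There is no real obstacle here; the only point requiring any care is ensuring that the substitution rule used inside the IPC-derivation of $B$ from $\Gamma$ corresponds precisely to the rule (Sb) of $\DS$, which it does because both permit applying an arbitrary $\sigma \in \Sigma$ to any previously derived (positive) formula. Consequently, the induction goes through without incident, and the resulting {\L}-inference uses only axioms from $\Ax^i$, the hypotheses $\mpos A_1,\dots,\mpos A_n$, and the rules (MP) and (Sb).
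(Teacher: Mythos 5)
Your proposal is correct and is exactly the argument the paper has in mind: the paper's proof is the one-line remark that an IPC-inference of $B$ from $A_1,\dots,A_n$ converts directly into an {\L}-inference of $\mpos B$ from $\mpos A_1,\dots,\mpos A_n$, and you have simply written out the routine induction (axioms of IPC go to $\Ax^i\subseteq\Ax$, hypotheses to $\mpos A_i$, Modus Ponens to (MP), Substitution to (Sb)) that justifies that remark.
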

\begin{proof} It is not hard to see that any inference of $B$ from $A_1,\dots,A_n$ in IPC can be easily converted into an inference of $\mpos B$ from $\mpos A_1,\dots,\mpos A_n$ in $\DS$.
\end{proof}

\begin{Theorem} \label{ThCompl} Every intermediate logic $\Logl$ can be defined by a standard deductive system $\Ds$, where every axiom $\mnu A \in \Ax$ is as statement obtained either from an axiom of IPC or from a formula $A \in \Frm^c$. In other words, given a complete set of formulas $\Frm^c$, every intermediate logic can be {\L}-axiomatized over IPC by formulas from $\Frm^c$ as additional axioms and anti-axioms. 
\end{Theorem}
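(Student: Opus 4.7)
The plan is to construct $\DS$ explicitly from the data $\Logl = \langle \LogL^+, \LogL^- \rangle$ and $\Frm^c$, then verify the four equivalences $\vdash_\DS \mpos A \Leftrightarrow A \in \LogL^+$ and $\vdash_\DS \mrej A \Leftrightarrow A \notin \LogL^+$; together these yield coherence and fullness.

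Using completeness of $\Frm^c$, I would fix a subset $\mathcal{F}^+ \subseteq \Frm^c$ with $\LogL^+ = \Int + \mathcal{F}^+$ and set
\[
\Ax^+ := \Ax^i \cup \{\mpos A \mid A \in \mathcal{F}^+\}, \qquad \Ax^- := \{\mrej A \mid A \in \Frm^c \setminus \LogL^+\},
\]
taking $\Rules$ to be $(MP), (Sb), (MT), (RS)$. Every axiom is then valid in the Lindenbaum matrix $\alg{M}(\Logl)$ (whose designated set is $\LogL^+$): positive axioms are IPC-axioms or lie in $\mathcal{F}^+ \subseteq \LogL^+$, and each anti-axiom $\mrej A$ has $A \in \Frm^c \setminus \LogL^+$. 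So Proposition~\ref{adequate} delivers the two soundness directions $\vdash_\DS \mpos A \Rightarrow A \in \LogL^+$ and $\vdash_\DS \mrej A \Rightarrow A \notin \LogL^+$, which in particular gives coherence.

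The positive completeness direction $A \in \LogL^+ \Rightarrow \vdash_\DS \mpos A$ is a direct application of Proposition~\ref{infconv} to any IPC-derivation of $A$ from $\mathcal{F}^+$, since every $\mpos B$ with $B \in \mathcal{F}^+$ is already an axiom. The substantive step is the negative completeness direction $A \notin \LogL^+ \Rightarrow \vdash_\DS \mrej A$. Given $A \notin \LogL^+$, I have $\Int \not\Vdash A$ (else $A \in \mathsf{IPL} \subseteq \LogL^+$), so Proposition~\ref{interder} yields $A_1, \dots, A_n \in \Frm^c$ with $A_1, \dots, A_n \Vdash A$ and $A \Vdash A_i$ for every $i$. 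Because $\LogL^+$ is closed under IPC-derivation, the first clause forces at least one $A_{i_0} \notin \LogL^+$, so $\mrej A_{i_0} \in \Ax^-$. The second clause together with the IPC deduction theorem gives $\Vdash A \impl A_{i_0}$, hence, by Proposition~\ref{infconv}, $\vdash_\DS \mpos(A \impl A_{i_0})$; a single application of $(MT)$ with the anti-axiom $\mrej A_{i_0}$ then produces $\vdash_\DS \mrej A$, establishing fullness.

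The only delicate point is the extraction of the refuting witness $A_{i_0}$; everything else is mechanical. This is precisely where the two-sided completeness condition \eqref{compl} of Proposition~\ref{interder} is used in its full strength, and it is what allows a single $(MT)$ step to close the loop between semantic refutability and syntactic rejection.
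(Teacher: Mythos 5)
Your construction of the system and the soundness/coherence argument via the Lindenbaum matrix match the paper, as does the positive completeness direction. The gap is in the negative completeness direction, at the step where you pass from $A \Vdash A_{i_0}$ to $\Vdash A \impl A_{i_0}$ ``by the IPC deduction theorem.'' The relation $\Vdash$ in this paper is derivability in IPC \emph{with the substitution rule} (this is forced by its role: $\Int + \Gamma = \set{A}{\Gamma \Vdash A}$ is the logic axiomatized by $\Gamma$, so the premises are used as axiom schemes). The deduction theorem fails for this consequence relation --- e.g.\ $p \Vdash q$ but $\not\Vdash p \impl q$ --- and in the intended application it fails essentially: when $\Frm^c$ is the set of canonical formulas, the derivation witnessing $A \Vdash A_{i_0}$ genuinely substitutes into $A$, so $A \impl A_{i_0}$ is in general not an intuitionistic theorem. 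Hence your single application of (MT) does not go through.

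This is exactly the point the paper's Theorem~\ref{SymTh} (the Symmetry Theorem) exists to handle: from $\nvdash_\DS \mpos A_{i_0}$ and $\mpos A \vdash_\DS \mpos A_{i_0}$ it extracts $\mrej A_{i_0} \vdash_\DS \mrej A$ by an induction that reverses the whole derivation, using (MT) to undo Modus Ponens steps \emph{and} (RS) to undo substitution steps (this is also why (RS) is part of the rule set at all --- your argument never uses it, which is a symptom of the problem). To repair your proof, replace the deduction-theorem step by: Proposition~\ref{infconv} turns $A \Vdash A_{i_0}$ into $\mpos A \vdash_\DS \mpos A_{i_0}$; coherence gives $\nvdash_\DS \mpos A_{i_0}$ (since $\mrej A_{i_0} \in \Ax^-$); Theorem~\ref{SymTh} then yields $\mrej A_{i_0} \vdash_\DS \mrej A$, and the anti-axiom $\mrej A_{i_0}$ closes the argument. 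Your identification of the witness $A_{i_0} \notin \LogL^+$ is correct and is the same as in the paper; the delicate point is not its extraction but the reversal of the derivation.
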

\begin{proof}
Let $\ulog$ be an intermediate logic. Let us consider the {\L}-deductive system $\Ds$, where 
\begin{equation}
\Ax = \Ax^i \cup \set{\mpos A}{A \in \LogL^+ \cap \Frm^c} \cup \set{\mrej A}{A \in \LogL^- \cap \Frm^c},
\end{equation}
i.e. axioms of $\DS$ are statements obtained from the axioms of IPC and canonical formulas. We need to demonstrate that $DS$ defines $\Logl$. We will show
\begin{itemize}
\item[(a)] If $A \in \LogL^+$, then $\vdash_\DS \mpos A$;
\item[(b)] If $A \in \LogL^-$, then $\vdash_\DS \mrej A$;
\item[(c)] $\DS$ is coherent.  
\end{itemize}

Note, that fullness of $\DS$ immediately follows from (a) and (b). Thus, if $\DS$ enjoys (a),(b) and (c), then $\DS$ is standard. Also, it is not hard to see, that if $\DS$ enjoys (a),(b) and (c), then $\DS$ defines $\Logl$. So, all we need to do is to prove (a),(b) and (c). 

First, we will establish coherence of the system $\DS$.

\textbf{Proof of (c).} Let us take a Lindenbaum matrix $\alg{M}(\Logl) \bydef \lbr \Frm, \LogL^+ \rbr$. By the definition of $\DS$ all the axioms of $\DS$ are valid in $\alg{M}(\Logl)$. Hence, by the Proposition \ref{adequate}, $\alg{M}(\Logl)$ is an adequate model of $\DS$ and, by virtue of the Proposition \ref{standard}, $\DS$ is a standards {\L}-deductive system and, thus, is coherent.

\textbf{Proof of (a).} Assume $A \in \LogL^+$. If $A$ is derivable in IPC, that is $\Vdash A$, then, by virtue of the Proposition \ref{infconv}, 
\[
\vdash_\DS \mpos A.
\]

Assume $A \in \LogL^+$ and $A$ is not derivable in IPC. Then, by virtue of \eqref{compl}, there are such formulas $C_1,\dots,C_n \in \Frm^c$ that
\[
C_1, \dots, C_n \Vdash A.
\]
Then, by virtue of the Proposition \ref{infconv},
\[
\mpos C_1, \dots, \mpos C_n \vdash_\DS \mpos A.
\]
Recall, that by the definition of $\DS$, we have $\mpos C_1, \dots, \mpos C_n \in \Ax$. Hence,
\[
\vdash_\DS \mpos A.
\] 

\textbf{Proof of (b).} Assume $A \in \LogL^-$. Then, by virtue of \eqref{compl}, there are such formulas $C_1,\dots,C_n \in \Frm^c$ that
\begin{equation}
C_1,\dots,C_n \Vdash A \text{ and } A \Vdash C_i \text{ for all } i=1,\dots,n. \label{Zakh}
\end{equation}
Let us observe that, due to $A \in \LogL^-$, one of the formulas $C_i, i=1,\dots,n $ is in $\LogL^-$. Suppose $C_1 \in \LogL^-$ and, hence, 
\begin{equation}
\mrej C_1 \in \Ax. \label{sym0} 
\end{equation}
We already proved that system $\DS$ is coherent. Thus,
\begin{equation} 
\nvdash_\DS \mpos C_1. \label{sym1}
\end{equation} 
On the other hand, by \ref{Zakh},
\begin{equation}
A \Vdash C_1. \label{sym2}
\end{equation} 
And, by virtue of the Proposition \ref{infconv}, 
\begin{equation}
\mpos A \vdash_\DS \mpos C_1. \label{sym3}
\end{equation} 
From \eqref{sym1} and \eqref{sym3}, by virtue of the Theorem \ref{SymTh}, we have
\begin{equation}
\mrej C_1 \vdash_\DS \mrej A. \label{sym4}
\end{equation}
And from \eqref{sym0} and \eqref{sym4} we can conclude 
\[
\vdash_\DS \mrej A.
\]
\end{proof}

\begin{cor} Every finitely {\L}-axiomatizable intermediate logic $\Logl$ can be defined by a standard deductive system $\Ds$ with finite number of axioms and every axiom $\mnu A \in \Ax$ is as statement obtained from an axiom of IPC or from a canonical formula $A$. 
\end{cor}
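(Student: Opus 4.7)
The plan is to derive this corollary from Theorem \ref{ThCompl} by exploiting the finiteness of individual inferences. Suppose $\Logl$ is finitely {\L}-axiomatizable, so there is an {\L}-deductive system $\DS_0$ with a finite axiom set $\Ax_0$ that defines $\Logl$. Applying Theorem \ref{ThCompl}, I also have at my disposal a (possibly infinite) standard {\L}-deductive system $\DS^*$ whose axioms are drawn from $\Ax^i$ together with statements of the form $\mnu C$ with $C \in \Frm^c$, and which likewise defines $\Logl$.

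For every $\alpha \in \Ax_0$, the fact that $\vdash_{\DS^*} \alpha$ gives a finite inference, and hence only finitely many axioms of $\DS^*$ are invoked. Collecting the axioms used across the finitely many $\alpha \in \Ax_0$ yields a finite set $\Ax \subseteq \Ax^i \cup \{\mnu C \mid C \in \Frm^c\}$, and I let $\DS$ be the {\L}-deductive system with axiom set $\Ax$. By construction, $\DS$ has the required finite axiom set consisting only of axioms of IPC and (assertions or rejections of) canonical formulas.

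It remains to check that $\DS$ defines $\Logl$. For the inclusions $\LogL^+ \subseteq \{A : \vdash_\DS \mpos A\}$ and $\LogL^- \subseteq \{A : \vdash_\DS \mrej A\}$, note that every axiom in $\Ax_0$ is derivable in $\DS$ (its inference in $\DS^*$ uses only axioms we have retained), so any inference in $\DS_0$ can be replayed verbatim in $\DS$. For the reverse inclusions together with coherence, observe that every member of $\Ax$ is already an axiom of $\DS^*$, and the proof of Theorem \ref{ThCompl} established that each such axiom is valid in the Lindenbaum matrix $\alg{M}(\Logl)$. Proposition \ref{adequate} then makes $\alg{M}(\Logl)$ an adequate model of $\DS$, and Proposition \ref{standard} gives that $\DS$ is standard, hence coherent; combined with the forward inclusions, this forces $\Logl(\DS) = \Logl$.

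There is no genuine obstacle here: the corollary is a finitary trimming of Theorem \ref{ThCompl}. The only point requiring mild care is that the subsystem $\DS$ obtained by discarding axioms from $\DS^*$ must still be adequate for $\alg{M}(\Logl)$, but this is automatic, since restricting to a smaller axiom set only shrinks what is derivable while preserving the validity of the retained axioms in $\alg{M}(\Logl)$.
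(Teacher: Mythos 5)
Your proof is correct and follows exactly the route the paper intends: it applies Theorem \ref{ThCompl} and then uses the finitarity of $\vdash_\DS$ to trim the axiom set down to the finitely many axioms actually used in deriving the finitely many axioms of the given finite {\L}-axiomatization. The paper states this in one sentence; your write-up is simply a careful elaboration of that same argument, including the (correct) use of the Lindenbaum matrix to keep the trimmed system coherent.
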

\begin{proof} The proof immediately follows from the above Theorem and the finitarity of the relation $\vdash_\DS$. 
\end{proof}

%%%%%%%%%%%%%%%%%%%%%%%%%%%%%%%%%%%%%%%%%%%%%%%%%%%%%%%%%%%%
\section{Refutation in \textbf{NExtK4}} \label{refK4}

From this point forward, $\Frm$ will denote the set of all formulas in the signature $\land,\lor,\impl,\sim,\Box$.  

In order to consider normal modal logics, first, we need to extend the set $\Rules$ of rules by adding to (MP),(MT),(Sb) and (RS) the rules
\[
\begin{array}{lll}
\text{Necessitation} & \mpos(A),\mpos \Box A& (NS)\\
\text{Reverse Necessitation} &\mrej \Box A/ \mrej A & (RN)\\
\end{array}
\]

Next, we need to establish that the Theorem about symmetry holds in \textbf{NExtK4}.

\begin{Theorem}[about symmetry in \textbf{NExtK4}] \label{SymThK4} For any {\L}-deductive system $\Ds$ and any $A_1,\dots,A_n,B \in  \Frm$ if
\[
\nvdash_\DS \mpos B \text{ and } \mpos A_1,\dots,\mpos A_n \vdash_\DS \mpos B,
\] 
then 
\[
\mrej B \vdash_\DS \mrej A_i
\] 
for some $1 \leq i \leq n$.
\end{Theorem}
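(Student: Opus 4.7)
The plan is to follow the structure of the proof of Theorem \ref{SymTh} almost verbatim, extending the induction to handle the one new rule that can produce a positive conclusion, namely (NS). By Proposition \ref{posinfer} I may assume the given inference of $\mpos B$ from $\mpos A_1,\dots,\mpos A_n$ consists entirely of positive statements. I then proceed by induction on its length. The basis, and the inductive cases in which $\mpos B$ is the output of (MP) or (Sb), are handled exactly as in Theorem \ref{SymTh}: both arguments only rely on the assumption $\vdash_\DS \mpos(A \impl (B \impl A))$, the rules (MT) and (RS), and the induction hypothesis, all of which are still available in the $\KF$ setting.

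The only genuinely new inductive case is (NS). Suppose the last step of the inference is an application of Necessitation, so $B = \Box C$ and some earlier statement $\mpos B_j = \mpos C$ occurs in the inference, where the prefix $\mpos B_1,\dots,\mpos B_j$ is itself an inference of $\mpos C$ from $\mpos A_1,\dots,\mpos A_n$. I would first rule out the possibility that $\vdash_\DS \mpos C$: a single further application of (NS) would then give $\vdash_\DS \mpos \Box C = \mpos B$, contradicting the hypothesis $\nvdash_\DS \mpos B$. Hence $\nvdash_\DS \mpos C$, and the induction hypothesis applies to the shorter inference of $\mpos C$, yielding
\[
\mrej C \vdash_\DS \mrej A_i \quad \text{for some } 1 \leq i \leq n.
\]
On the other hand, since $\mrej B = \mrej \Box C$, the rule (RN) immediately gives $\mrej B \vdash_\DS \mrej C$, and chaining the two derivations completes the case.

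I expect no serious obstacle. The main conceptual point is simply to notice that (RN) plays exactly the role for (NS) that (MT) and (RS) played for (MP) and (Sb) in the intermediate case: it is the ``negative dual'' that converts the induction hypothesis about $C$ into the desired conclusion about $\Box C$. Unlike the (MP) case, no subcase split is required, because (NS) is a one-premise rule and the possibility that its premise is itself derivable is ruled out outright by one more application of the same rule.
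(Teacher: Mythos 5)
Your proposal is correct and follows essentially the same route as the paper: reduce to the proof of Theorem \ref{SymTh}, treat only the new case of (NS) by ruling out $\vdash_\DS \mpos C$ via one more application of (NS), applying the induction hypothesis to the premise $C$, and closing the gap with (RN). Your write-up is in fact slightly more careful than the paper's, which leaves the justification of $\nvdash_\DS \mpos C$ implicit and reuses the letters $A_j$ ambiguously.
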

\begin{proof} Similarly to Theorem \ref{SymTh}, the proof can be done by induction on the length of inference. We can repeat the proof of the Theorem \ref{SymThK4} and consider only the additional case for (NS). 

\textbf{The case of (NS).} Suppose $B = \mpos \Box A_j$, where $1 \leq j < m$. Then $\nvdash_\DS \mpos A_j$, for $\nvdash_\DS \mpos \Box A_j$.  Note, that $\mpos A_1,\dots, \mpos A_j$ is an inference of $\mpos A_j$ from $\mpos A_1,\dots,\mpos A_{j-1}$. Hence, by the induction hypothesis,
\begin{equation}
\mrej A_j \vdash_\DS \mrej A_i, \text{ for some } 1 \leq i < j. \label{ns1}
\end{equation} 
By (RN), we also have
\begin{equation}
\mrej \Box A_j \vdash_\DS \mrej A_j. \label{ns2}
\end{equation}
And, combining \eqref{ns1} and \eqref{ns2}, we have
\[
\mrej \Box A_j \vdash_\DS \mrej A_i. 
\]
Recall, that $B = \Box A_j$, thus, we can conclude the proof of this case.
\end{proof}

Given a complete set of formulas, for instance, the set of canonical formulas, one can prove the following theorem (using the same argument as in proof of the Theorem \ref{ThCompl}).

\begin{Theorem} Every logic $\Logl \in \text{\normalfont \textbf{NExtK4}}$ can be defined by a standard deductive system $\Ds$, where every axiom $\mnu A \in \Ax$ is as statement obtained from an axiom of \textbf{\normalfont $\KF$} or from a canonical formula $A$. In other words, every logic from \textbf{\normalfont \textbf{NExtK4}} can be {\L}-axiomatized by canonical formulas as additional axioms. 
\end{Theorem}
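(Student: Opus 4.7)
The plan is to transplant the proof of Theorem \ref{ThCompl} into the modal setting, replacing $\Int$ by $\KF$, $\mathsf{IPL}$ by the modal base, and Theorem \ref{SymTh} by Theorem \ref{SymThK4}; the expanded rule set already contains $(NS)$ and $(RN)$, so the only thing to check is that each of the three ingredients (coherence, derivability of positive theorems, derivability of refutations) transfers cleanly.

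First I would fix $\Logl \in \mathbf{NExt}\KF$ and define the {\L}-deductive system $\Ds$ by taking
\[
\Ax \bydef \Ax^{\KF} \cup \set{\mpos A}{A \in \LogL^+ \cap \Frm^c} \cup \set{\mrej A}{A \in \LogL^- \cap \Frm^c},
\]
where $\Ax^{\KF}$ is the set of positive statements obtained from the axioms of $\KF$ and $\Frm^c$ is any complete set of formulas for $\Ext\KF$ (e.g.\ the canonical formulas). As in Theorem \ref{ThCompl}, it suffices to verify the three conditions (a) $A \in \LogL^+ \Rightarrow \vdash_\DS \mpos A$, (b) $A \in \LogL^- \Rightarrow \vdash_\DS \mrej A$, and (c) $\DS$ is coherent, from which fullness and definability of $\Logl$ follow.

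For (c) I would invoke the Lindenbaum matrix $\alg{M}(\Logl) = \lbr \Frm, \LogL^+ \rbr$; every axiom of $\DS$ is valid in $\alg{M}(\Logl)$ (the axioms of $\KF$ and the canonical formulas in $\LogL^+$ lie in $\LogL^+$, while the canonical formulas in $\LogL^-$ do not), so Proposition \ref{adequate} and Proposition \ref{standard} give standardness, hence coherence. Note that here I rely on $\LogL^+$ being closed under $(MP)$, $(Sb)$, and $(NS)$ (as $\Logl$ is a normal extension of $\KF$) and on $\LogL^-$ being closed under $(MT)$, $(RS)$, $(RN)$ in the obvious dual sense. For (a) I would use the straightforward modal analogue of Proposition \ref{infconv}: any derivation of $A$ from $\Gamma$ in $\KF$ lifts to an inference of $\mpos A$ from $\mpos\Gamma$ in $\DS$, because each rule of $\KF$ — namely $(MP)$, $(Sb)$, $(NS)$ — is among the rules of $\DS$. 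Completeness of $\Frm^c$ then expresses any $A \in \LogL^+$ as $\KF$-derivable from finitely many canonical formulas in $\LogL^+ \cap \Frm^c$, yielding $\vdash_\DS \mpos A$.

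For (b) I would repeat verbatim the symmetry argument of Theorem \ref{ThCompl}: given $A \in \LogL^-$, completeness of $\Frm^c$ furnishes $C_1,\dots,C_n \in \Frm^c$ that, together with $\KF$, axiomatize the same logic as $A$, and at least one of them — say $C_1$ — must lie in $\LogL^-$, hence $\mrej C_1 \in \Ax$ and by coherence $\nvdash_\DS \mpos C_1$; since $A \Vdash_{\KF} C_1$ lifts via the modal analogue of Proposition \ref{infconv} to $\mpos A \vdash_\DS \mpos C_1$, Theorem \ref{SymThK4} delivers $\mrej C_1 \vdash_\DS \mrej A$, whence $\vdash_\DS \mrej A$. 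The only non-routine ingredient is that Theorem \ref{SymThK4} genuinely covers the $(NS)$-step introduced by the richer rule set — and that has already been handled in the excerpt. Everything else is a direct translation of the intermediate case, so I would not expect any further obstacle.
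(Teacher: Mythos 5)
Your proposal is correct and follows exactly the route the paper intends: the paper itself only remarks that the theorem is proved "using the same argument as in proof of Theorem \ref{ThCompl}" once Theorem \ref{SymThK4} handles the extra (NS)/(RN) rules, and your write-up is a faithful expansion of that sketch, including the key points that the $C_i$ witnessing completeness for $A \in \LogL^+$ lie in $\LogL^+$ and that the Lindenbaum matrix validates all axioms.
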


And, similarly to intermediate logics, the following holds.

\begin{cor} Every finitely {\L}-axiomatizable logic $\Logl \in \text{\normalfont \textbf{NExtK4}}$ can be defined by a standard deductive system $\Ds$ with finite number of axioms and every axiom $\mnu A \in \Ax$ is as statement obtained from an axiom of \textbf{\normalfont $\KF$} or from a canonical formula $A$. 
\end{cor}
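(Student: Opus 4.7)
The plan is to mimic the proof of the analogous corollary in Section \ref{refInt}, i.e., combine the preceding Theorem (which supplies a standard deductive system axiomatized over \textbf{K4} by all canonical formulas in $\LogL^+$ and $\LogL^-$) with the finitariness of the relation $\vdash_\DS$.

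First, suppose $\Logl = \lbr \LogL^+,\LogL^- \rbr$ is finitely {\L}-axiomatizable. By definition, there is an {\L}-deductive system $\DS_0$ with a finite axiom set $\Ax_0 = \set{\mpos P_1,\dots,\mpos P_k}{} \cup \set{\mrej Q_1,\dots,\mrej Q_\ell}{}$ that defines $\Logl$. Next, let $\DS$ be the standard deductive system provided by the preceding Theorem, whose axioms are the axioms of $\KF$ together with $\set{\mpos C}{C \in \LogL^+ \cap \Frm^c}$ and $\set{\mrej C}{C \in \LogL^- \cap \Frm^c}$. Since $\DS$ also defines $\Logl$, we have $\vdash_\DS \mpos P_i$ for $i=1,\dots,k$ and $\vdash_\DS \mrej Q_j$ for $j=1,\dots,\ell$.

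Now apply finitariness: each inference in $\DS$ uses only finitely many axioms. So for every $P_i$ there is a finite subset $\Ax_i \subseteq \Ax$ with $\Ax_i \vdash_\DS \mpos P_i$, and for every $Q_j$ a finite subset $\Ax'_j \subseteq \Ax$ with $\Ax'_j \vdash_\DS \mrej Q_j$. Let $\Ax^*$ consist of all axioms of $\KF$ together with the (finitely many) canonical formulas appearing in $\bigcup_i \Ax_i \cup \bigcup_j \Ax'_j$, and let $\DS^*$ be the corresponding {\L}-deductive system. Then $\DS^*$ has finitely many axioms (beyond those of $\KF$, which themselves form a finite axiomatization), all of them are of the required form, and every axiom/anti-axiom of $\DS_0$ is derivable in $\DS^*$; conversely, every axiom of $\DS^*$ is an axiom of $\DS$, hence is valid in the Lindenbaum matrix of $\Logl$, so by Propositions \ref{adequate} and \ref{standard} the system $\DS^*$ is standard and defines a logic contained in $\Logl$. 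Combining the two inclusions gives $\Logl(\DS^*) = \Logl$.

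The only step requiring any care is the check that $\DS^*$ still defines $\Logl$ on the nose: the inclusion $\Logl(\DS^*) \subseteq \Logl$ is immediate from adequacy of the Lindenbaum matrix, and the reverse inclusion follows because $\DS_0 \subseteq \DS^*$ in the sense that each axiom of $\DS_0$ is derivable in $\DS^*$, so every statement derivable in $\DS_0$ is derivable in $\DS^*$. There is no genuine obstacle here; the argument is entirely parallel to the intermediate case, with \textbf{K4} playing the role of \textbf{IPC} and with the Theorem about symmetry in \textbf{NExtK4} having already absorbed the additional rules (NS) and (RN).
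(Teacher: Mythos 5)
Your argument is correct and follows exactly the route the paper intends: the paper's own justification is the one-line remark that the corollary "immediately follows from the above Theorem and the finitarity of the relation $\vdash_\DS$", and your proposal is a faithful elaboration of that — extract, via finitarity, the finitely many canonical formulas needed to derive the finitely many axioms of the given finite system $\DS_0$, then verify both inclusions using adequacy of the Lindenbaum matrix on one side and derivability of the $\DS_0$-axioms on the other. No gaps.
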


\bibliographystyle{acm}
%%%\bibliography{../../Bibl}

\begin{thebibliography}{10}

\bibitem{Carnap_Formalization_1943}
{\sc Carnap, R.}
\newblock {\em Formalization of {L}ogic}.
\newblock Harvard University Press, Cambridge, Mass., 1943.

\bibitem{Chagrov_Zakh}
{\sc Chagrov, A., and Zakharyaschev, M.}
\newblock {\em Modal logic}, vol.~35 of {\em Oxford Logic Guides}.
\newblock The Clarendon Press Oxford University Press, New York, 1997.
\newblock Oxford Science Publications.

\bibitem{Goranko_Refutation_1994}
{\sc Goranko, V.}
\newblock Refutation systems in modal logic.
\newblock {\em Studia Logica 53}, 2 (1994), 299--324.

\bibitem{Heyting_Book_1966}
{\sc Heyting, A.}
\newblock {\em Intuitionism: {A}n introduction}.
\newblock Second revised edition. North-Holland Publishing Co., Amsterdam,
  1966.

\bibitem{Lukasiewicz_Book}
{\sc {\L}ukasiewicz, J.}
\newblock {\em Aristotle's syllogistic from the standpoint of modern formal
  logic}.
\newblock Oxford, at the Clarendon Press, 1951.

\bibitem{Skura_Syntactic_1994}
{\sc Skura, T.}
\newblock Syntactic refutations against finite models in modal logic.
\newblock {\em Notre Dame J. Formal Logic 35}, 4 (1994), 595--605.

\bibitem{Skura1998}
{\sc Skura, T.}
\newblock {\em Aspects of Refutation Procedures in the Intuitionistic Logic and
  Related Modal Systems}.
\newblock Acta Universitatis Wratislaviensis N 2190, Wroclaw, 1998.

\bibitem{Staszek_1971}
{\sc Staszek, W.}
\newblock On proofs of rejection.
\newblock {\em Studia Logica 29\/} (1971), 17--25.

\bibitem{Tomaszewski_PhD}
{\sc Tomaszewski, E.}
\newblock {\em On sufficiently rich sets of formulas}.
\newblock PhD thesis, Institute of Philosophy, JagellonianUniversity, Krakov,
  2003.

\bibitem{Zakharyashchev_Syntax_1988}
{\sc Zakharyashchev, M.~V.}
\newblock Syntax and semantics of modal logics that contain {${\rm S}4$}.
\newblock {\em Algebra i Logika 27}, 6 (1988), 659--689, 736.

\end{thebibliography}

\end{document}